\def\bp{\begin{proof}}
\def\ep{\end{proof}}
\def\n{\nabla}
\def\intl#1{\int\limits_{#1}}
\def\intll#1#2{\int\limits_{#1}^{#2}}
\def\dm{|\hskip-0.05cm|}
\def\OO{\Omega}
\def\displ{\displaystyle}
\def\VS{\vspace{6pt}\\\displ }
\def\rf#1{{\rm(\ref{#1})}}
\def\R{\Bbb R}
\def\N{\Bbb N}
\def\à {\`{a}}
\def\dy{\displaystyle}
\def\vep{\varepsilon}
\def\be{\begin{equation}}
\def\ba{\begin{array}}
\def\ea{\end{array}}
\def\ee{\end{equation}}
\def\vs1{\vspace{1ex}}
\def\ov{\overline}
\def\po{\partial\Omega}
\def\é{\'{e}}
\font\sc=cmcsc10
\title{\large\bf Navier-Stokes equations:\\a new estimate of a possible gap related to  the energy equality of a suitable weak solution}
\author{\sc  Paolo Maremonti$^1$, Francesca Crispo$^1$ and  Carlo Romano Grisanti$^2$ 
\thanks{$^1$ Dipartimento di Matematica e Fisica,  
Universit\`{a} degli 
Studi della Campania
``L. Vanvitelli'', via Vivaldi 43, 81100 Caserta,
 Italy. francesca.crispo@unicampania.it\,,
paolo.maremonti@unicampania.it      \newline $^2$ Dipartimento di Matematica Universit\`{a} di Pisa, via Buonarroti 1/c, 56127 Pisa, Italy. carlo.romano.grisanti@unipi.it}}
\date{}
\begin{document}
\markboth{\footnotesize\rm  P.Maremonti, F.Crispo and  C.R. Grisanti} {\footnotesize\rm
Navier-Stokes equations: a new estimate of a possible gap...}
\maketitle 
 \par\noindent{\small Keywords: Navier-Stokes equations,  weak solutions, energy equality. }
  \par\noindent{\small  
  AMS Subject Classifications: 35Q30, 35B65, 76D05.}  
\noindent
\newcommand{\red}{\protect\bf}
\renewcommand\refname{\centerline
{\red {\normalsize \bf References}}}
\newtheorem{ass}
{\bf Assumption} 
\newtheorem{defi}
{\bf Definition} 
\newtheorem{tho}
{\bf Theorem} 
\newtheorem{rem}
{\sc Remark} 
\newtheorem{lemma}
{\bf Lemma} 
\newtheorem{coro}
{\bf Corollary} 
\newtheorem{prop}
{\bf Proposition} 
\renewcommand{\theequation}{\arabic{equation}}
\setcounter{section}{0}
\par{\bf Abstract} - The paper is concerned with the IBVP of the Navier-Stokes equations. The result of the paper is in the wake of analogous results obtained by the authors in previous articles \cite{CGM-I,CGM-II}. The goal is to estimaste the possible gap between the energy equality and the energy inequality deduced for a weak solution. 
\vskip1cm\section{\large Introduction}This note concerns the 3D-Navier-Stokes initial boundary 
value problem:
\be\label{NS}\ba{l}v_t+v\cdot
\nabla v+\nabla\pi_v=\Delta
v+f,\;\nabla\cdot
v=0,\mbox{ in }(0,T)\times\OO,\VS v=0\mbox{ on }(0,T)\times\po,\hskip0.12cm
v(0,x)=v_0(x)\mbox{ on
}\{0\}\times\OO.\ea\ee  In system \rf{NS} $\OO\subseteq\R^3$ is assumed bounded or exterior, and its boundary is assumed  smooth.
\par In the two recent papers \cite{CGM-I,CGM-II} the authors look for an energy equality for suitable weak solutions. Here,  the term suitable is meant in the sense that a new solution is exhibited and not that an improvement is { obtained to} the one given in \cite{CKN}. Actually, the crucial result of papers \cite{CGM-I,CGM-II}, and it seems the first, is the strong convergence in  $L^p(0,T;W^{1,2}(\OO))\cap L^2(0,T;L^2(\OO))$, for all $T>0$ and $p\in[1,2)$, of a sequence $\{v^m\}$ of smooth solutions to the ``Leray's approximating Navier-Stokes Cauchy problem'' (see \rf{MNS} below), \cite{L}. \par Since the strong convergence is not in $L^2(0,T;W^{1,2}(\OO))$, the authors attempt to obtain the energy equality employing the (differential and integral) energy equality of the approximating solutions and   some auxiliary functions. Actually, the approaches used so far allow to prove an energy equality which involves other quantities.  Here  it is proved that a suitable weak solution exists and  satisfies the following relation  
\be\label{SLM-O}\dm v(t)\dm_2^2+2 \intll st\dm\n v(\tau)\dm_2^2d\tau  -\dm v(s)\dm_2^2-\intll st(f,v)d\tau=-M(s,t)\mbox{ for all }0<s<t\in\mathcal T\,,\ee
where  $$\mathcal T:=\left\{t\in(0,T): \|v^m(t)\|_{1,2}\to\|v(t)\|_{1,2}\right\}$$ is of full measure in $(0,T)$,  and
$$-M(s,t):=-2\lim_{\alpha\to1^-}\overline{\lim_m}\intl{J^m(\alpha)}\dm \n v^m(\tau)\dm_2^2\,d\tau=\lim_{\alpha\to1^-}\overline{\lim_m}\mbox{\large$\underset{h\in\N(\alpha,m)}\sum$}\Big[ \dm v^m(t_h)\dm_2^2-\dm v^m(s_h)\dm_2^2\Big]\,$$
where $J^m(\alpha)$ is the union of, at most, a countable sequence ($\N(\alpha,m)$) of disjoint intervals $(s_h,t_h)\subset (s,t)$ and  the following holds: $$\lim_{\alpha\to1^{-}}\frac{|J^m(\alpha)|}{1-\alpha}\leq\frac1\pi\dm v_0\dm_2^2+\frac2\pi\intll0t(f,v)d\tau\,,\;\mbox{ uniformly in } m\in\N.$$ Instead in the case of $s=0$, one obtains
\be\label{SLM-I}\dm v(t)\dm_2^2+2 \intll 0t\dm\n v(\tau)\dm_2^2d\tau  -\dm v_0\dm_2^2-\intll 0t(f,v)d\tau=-M(0,t)\mbox{ for all }t\in\mathcal T\,,\ee
where $$-M(0,T):=\lim_{s_k\to0}-M(s_k,t)\,,\mbox{ for any }\{s_k\}\subset \mathcal T\,.$$
  Roughly speaking the above intervals seem to contain the possible singular points $S$ of the weak solution that, as is known, has $\mathcal H^\frac12(S)=0$ ($\mathcal H^a$ Hausdorff's measure), \cite{Sh}.
Of course, independently of the   meaning of the conjecture  for the intervals, from a physical view point the energy relation \rf{SLM-O} would  add a dissipative quantity which is not justifiable. If this  is a necessary consequence of an initial datum  only in $L^2$, then from a physical point of view it is a right reason to reject the $L^2$-class as a class of existence.  \par However,   the  validity of an energy equality, without requiring extra conditions\footnote{\, In this connection  in paper \cite{Mi}, the so called Prodi-Serrin condition for the energy equality for a weak solution
  is not required  on the whole  interval of existence, but just on $(\vep,T)$, that is $L^4(\vep,T;L^4(\OO))$,  for all $\vep>0$. This means that no extra assumption on the initial datum in $L^2$ is needed  for the validity of the energy equality. \par Following \cite{GPG}, under the same quoted assumption, the same result of energy equality holds in the set of very-weak solutions.}, is interesting to better delimit the case of validity of possible counterexamples. \par Actually, in the papers \cite{BV} and \cite{ABC}  two examples of non-uniqueness are furnished. \par  The former works for very-weak solutions, which are continuous in $L^2$-norm, but  do not verify an energy inequality of the kind given by Leray-Hopf, in other words neglecting the term $M(s,t)$ with $\leq0$. Further, in the case of Leray-Hopf weak solutions their counterexample does not work. \par  The latter works with a homogeneous initial datum. Actually, the non-uniqueness is exhibited for solutions corresponding to a suitable data force, that, among other things, allows an energy equality.
\vskip0.1cm The plan of the paper is the following. In sect. 2  some preliminary lemmas are recalled and some new results of strong convergence are furnished. In sect. 3  the statement and  the proof of the chief result are performed.
\section{\large Preliminary results}
We set $J^{1,2}(\OO)$:=completion of $\mathscr C_0(\OO)$ in $W^{1,2}$-norm, where $\mathscr C_0(\OO)$ is the set of the test functions of the hydrodynamics.
\begin{defi}\label{WS}{\sl For weak solution to the IBVP \rf{NS} we mean a field $v:(0,\infty)\time\OO\to \R^3$ such that for all $T>0$
\begin{itemize}\item[1.] $v\in L^\infty(0,T;L^2(\OO))\cap L^2(0,T;J^{1,2}(\OO))\,,$\item[2.] the field $v$ solves the integral equation \newline $\displ\intll
st\Big[(v,\varphi_\tau)-(\nabla
v,\nabla
\varphi)+(v\cdot\nabla\varphi,v)+(\pi_v,\nabla
\cdot\varphi)\Big]d\tau+(v(s),\varphi
(s))=(v(t),\varphi(t)),$ \newline\null\hskip5cm for all  $\varphi\in C^1_0([0,T)\times\OO ),$\item[3.]$\displ \lim_{t\to0}\dm v(t)-v_0\dm_2=0\,.$\end{itemize}
}\end{defi}\vskip0.1cm
For our goals we consider  a mollified Navier-Stokes system. Hence problem \rf{NS} becomes
\be\label{MNS}\ba{l}v_t^m+J_m[v^m]\cdot
\nabla v^m+\nabla\pi_{v^m}=\Delta
v^m+f,\;\nabla\cdot
v^m=0,\mbox{ in }(0,T)\times\OO,\VS v^m=0\mbox{ on }(0,T)\times\po,\hskip0.12cm
v^m(0,x)=v_0^m(x)\mbox{ on
}\{0\}\times\OO,\ea\ee
where $f\in L^2(0,T,L^2(\Omega))$, \ $J_m[\cdot]$ is a mollifier and  $\{v_0^m\}\subset J^{1,2}(\OO)$ converges to $v_0$ in $J^2(\OO)$.
\begin{lemma}\label{EXR}{\sl For all $m\in\N$ there exists a unique solution to problem \rf{MNS} such that for all $T>0$ \be\label{EXR-I}\ba{l}v^m\in C([0,T);J^{1,2}(\OO))\cap L^2(0,T;W^{2,2}(\OO))\,,\VS
v^m_t,\n\pi^m\in L^2(0,T;L^2(\OO))\,.\ea\ee Moreover, the sequence $\{v^m\}$ is strong convergent to a limit $v$ in $L^p(0,T;W^{1,2}(\OO))\cap L^2(0,T;L^2(\OO))$, for all $p\in[1,2)$, and the limit $v$ is a weak solution to problem \rf{NS} with $(v(t),\varphi)\in C([0,T))$, for all $\varphi\in J^2(\OO)$.}\end{lemma}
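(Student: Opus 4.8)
The plan is to split the statement into its two independent parts: for each fixed $m$, the construction of the unique regular solution of \rf{MNS}; and then the extraction and identification of the limit $v$, with the strong convergence in $L^p(0,T;W^{1,2}(\OO))$, $p\in[1,2)$, as the genuinely delicate point.

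For fixed $m$ I would use the Faedo--Galerkin method on the eigenfunctions of the Stokes operator. The decisive simplification is that the transport field is smoothed and, since $J_m$ preserves the divergence-free constraint, $\n\cdot J_m[v^m]=0$; hence testing the projected equation with $v^m$ annihilates the convective term, $(J_m[v^m]\cdot\n v^m,v^m)=0$, and yields the basic identity $\frac12\frac{d}{dt}\dm v^m\dm_2^2+\dm\n v^m\dm_2^2=(f,v^m)$, so that the Galerkin system is globally solvable and $v^m\in L^\infty(0,T;L^2(\OO))\cap L^2(0,T;J^{1,2}(\OO))$. For the regularity \rf{EXR-I} I would test with the Stokes operator: the only dangerous term $(J_m[v^m]\cdot\n v^m,\Delta v^m)$ is bounded by $\dm J_m[v^m]\dm_\infty\dm\n v^m\dm_2\dm\Delta v^m\dm_2$, and the smoothing estimate $\dm J_m[v^m]\dm_\infty\le C_m\dm v^m\dm_2$ (with $C_m$ depending on $m$) lets me absorb $\dm\Delta v^m\dm_2$ and close a Gronwall inequality, giving $v^m\in L^2(0,T;W^{2,2}(\OO))$; reading $v_t^m=\Delta v^m-J_m[v^m]\cdot\n v^m-\n\pi^m+f$ with the Stokes estimate then yields $v_t^m,\n\pi^m\in L^2(0,T;L^2(\OO))$ and $v^m\in C([0,T);J^{1,2}(\OO))$. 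Uniqueness follows by estimating the equation for the difference of two solutions via the Lipschitz bound $\dm J_m[w]\dm_\infty\le C_m\dm w\dm_2$ and Gronwall.

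Next I collect the estimates uniform in $m$. The energy identity gives a uniform bound in $L^\infty(0,T;L^2)\cap L^2(0,T;J^{1,2})$; testing with $\vf\in J^{1,2}(\OO)$, using $(J_m[v^m]\cdot\n v^m,\vf)=-(J_m[v^m]\cdot\n\vf,v^m)$, the contraction $\dm J_m[v^m]\dm_4\le\dm v^m\dm_4$ (the mollifier does not increase $L^q$ norms) and the Gagliardo--Nirenberg inequality $\dm v^m\dm_4^2\le c\dm v^m\dm_2^{1/2}\dm\n v^m\dm_2^{3/2}$, gives a uniform bound of $v_t^m$ in $L^{4/3}(0,T;(J^{1,2}(\OO))')$. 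By Aubin--Lions, along a subsequence $v^m\to v$ strongly in $L^2(0,T;L^2(\OO))$, $v^m\tow v$ in $L^2(0,T;J^{1,2})$ and weakly-$*$ in $L^\infty(0,T;L^2)$. Passing to the limit in the weak formulation is standard: $J_m[v^m]\to v$ strongly in $L^2(0,T;L^2)$ while $\n v^m\tow\n v$, so the mollified convective term converges and $v$ solves the integral identity of Definition \ref{WS}. The uniform bounds on $v$ and $v_t$ give $v\in C_w([0,T];L^2(\OO))$, hence $(v(t),\vf)\in C([0,T))$ for $\vf\in J^2(\OO)$ by density, and $\dm v(t)-v_0\dm_2\to0$ follows from weak continuity together with $\limsup_{t\to0}\dm v(t)\dm_2\le\dm v_0\dm_2$ read off the energy inequality.

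The heart of the lemma is the strong convergence $\n v^m\to\n v$ in $L^p(0,T;L^2(\OO))$ for every $p<2$, which is exactly the crucial result of \cite{CGM-I,CGM-II}. The obstruction is structural: weak convergence in $L^2(0,T;J^{1,2})$ together with strong convergence in $L^2(0,T;L^2)$ do \emph{not} give strong convergence in $L^2(0,T;J^{1,2})$, because $\dm\n v^m\dm_2^2$ may concentrate on a time-set of vanishing measure, producing precisely the defect $M(s,t)$ of \rf{SLM-O}. I would proceed in two steps. First, using the energy equality of the smooth $v^m$ together with the strong $L^2(0,T;L^2)$ convergence, I identify the set $\mathcal T$ on which $\dm v^m(t)\dm_{1,2}\to\dm v(t)\dm_{1,2}$; combined with $\n v^m(t)\tow\n v(t)$ this upgrades to strong $L^2(\OO)$ convergence of the gradients for $t\in\mathcal T$, and the delicate point is that $\mathcal T$ has full measure. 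Second, once $\dm\n(v^m-v)(t)\dm_2\to0$ for a.e.\ $t$, I note that $\dm\n(v^m-v)(t)\dm_2^p$ is bounded in $L^{2/p}(0,T)$ uniformly in $m$ (since $\dm\n v^m\dm_2^2$ is uniformly bounded in $L^1(0,T)$ and $2/p>1$), which provides the equi-integrability needed for Vitali's theorem, giving $\int_0^T\dm\n(v^m-v)(\tau)\dm_2^p\,d\tau\to0$. The argument stops precisely at $p<2$: for $p=2$ the exponent $2/p$ drops to $1$, equi-integrability is lost, and the concentrated defect $M$ survives — the very phenomenon the paper sets out to measure. I therefore expect the full measure of $\mathcal T$, equivalently the a.e.\ strong convergence of the gradients, to be the genuine difficulty, which cannot come from soft compactness and must be obtained, as in \cite{CGM-I,CGM-II}, through the approximate energy equality and a concentration analysis of $\dm\n v^m\dm_2^2\,d\tau$.
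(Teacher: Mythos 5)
Be aware first that the paper itself offers no argument for this lemma: its entire proof is the citation of Theorem~6.1.1 of \cite{CGM-I} (proved there for $f=0$), together with the remark that the case $f\ne 0$ is a routine modification. So there is no in-paper argument to match your sketch against line by line; the relevant comparison is with the cited proof. Your two standard parts are correct as sketched: the fixed-$m$ construction (Galerkin, the cancellation $(J_m[v^m]\cdot\n v^m,v^m)=0$, testing with the Stokes operator, uniqueness by Gronwall), modulo the caveat that on a bounded or exterior domain the operator $J_m$ must be built so as to preserve solenoidality, and the compactness part (Aubin--Lions, passage to the limit in the mollified convective term, weak $L^2$-continuity and attainment of the datum), modulo the caveat that extraction of subsequences only gives the conclusion for a relabeled subsequence, since no uniqueness of the limit $v$ is available. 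Your reduction of the strong $L^p(0,T;W^{1,2}(\OO))$ convergence, $p<2$, to (i) a.e.-in-$t$ convergence $\|\n v^m(t)\|_2\to\|\n v(t)\|_2$ plus (ii) Vitali's theorem via the uniform $L^{2/p}(0,T)$ bound with $2/p>1$, is correct, and it correctly explains why the argument must stop short of $p=2$; this is indeed the skeleton of the proof in \cite{CGM-I,CGM-II}.

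The gap is step (i), which you defer to the references under the name ``concentration analysis''; since this step is the entire content of the lemma beyond soft functional analysis, a proof attempt should at least name the mechanism. Note also a circularity risk: in this paper the full measure of $\mathcal T$ is deduced \emph{from} the strong convergence asserted by the lemma, so it cannot serve as an input to its proof. The missing mechanism is quantitative, and it is precisely the estimate that the present paper isolates as Lemma~\ref{CL}: multiplying \rf{MNS} by $P\Delta v^m-v^m_t$ yields, uniformly in $m$, a bound of the form $\int_0^T\big|\frac{d}{dt}\|\n v^m(t)\|_2^2\big|\,\big(1+\|\n v^m(t)\|_2^2\big)^{-2}dt\le M$, i.e.\ a uniform bound on the total variation of the bounded functions $t\mapsto\big(1+\|\n v^m(t)\|_2^2\big)^{-1}$. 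Helly's selection theorem then produces a subsequence along which $\rho_m(t):=\|\n v^m(t)\|_2^2$ converges pointwise, with finite limit a.e.\ by Fatou's lemma — this is what no Aubin--Lions argument can give. Even then one is not finished: pointwise convergence of the norms together with $\n v^m(t)\tow\n v(t)$ in $L^2(\OO)$ only yields $\lim_m\rho_m(t)\ge\|\n v(t)\|_2^2$, and identifying the limit with $\|\n v(t)\|_2^2$ for a.e.\ $t$ (i.e.\ excluding a spatial concentration at fixed times) requires the companion weighted bounds on $\|P\Delta v^m\|_2$ and $\|v^m_t\|_2$ that come out of the same computation; this identification is where the real work of Theorem~6.1.1 of \cite{CGM-I} lies. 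In short: your structure is right and honestly flagged, but the core step is cited rather than proved — which puts your proposal exactly at the level of rigor of the paper's own one-line proof.
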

 \bp This lemma for data force $f=0$ is  Theorem\,6.1.1 proved in \cite{CGM-I}. It is not difficult to image that the proof can be modified without difficulty assuming $f\ne0$. So that we consider as achieved the proof of the lemma.\ep
\begin{lemma}\label{TINT}{\sl Let $\OO\subseteq\R^n$ and  let   $u\in W^{2,2}(\OO)\cap J^{1,2}(\OO) $. Then there exists a constant $c$ independent of $u$ such that    
\be\label{INTIII}\dm u\dm_r\!\leq\! c\dm P\Delta u\dm_2^a\dm u\dm_q^{1-a},\quad a\big(\mbox{$\frac12-\frac2n$}\big)+(1-a)\mbox{$\frac1q$}={\textstyle\frac1r},\ee
provided that   $a\in [0,1)$.}\end{lemma}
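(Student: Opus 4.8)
The plan is to reduce the inequality to the composition of two classical facts: the Gagliardo--Nirenberg interpolation inequality and the $L^2$ second--order regularity estimate for the stationary Stokes problem. First I would recall the Gagliardo--Nirenberg inequality in the purely homogeneous form
\be
\dm u\dm_r\leq c\,\dm D^2u\dm_2^{\,a}\,\dm u\dm_q^{1-a},\qquad {\textstyle\frac1r}=a\big({\textstyle\frac12-\frac2n}\big)+(1-a){\textstyle\frac1q},\quad a\in[0,1),
\ee
which is exactly the case $j=0$, $m=2$, $p=2$ of the standard interpolation inequality, so that the exponent relation in \rf{INTIII} is automatically the one produced by the scaling of the two factors. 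The restriction $a\in[0,1)$ is what prevents the endpoint $\frac1r=\frac12-\frac2n$ (the critical Sobolev exponent of $W^{2,2}$), where the homogeneous form may fail. Since $u\in J^{1,2}(\OO)$ the field vanishes on $\po$, and this is precisely the situation in which the homogeneous inequality holds on a smooth (bounded or exterior) domain: on a bounded $\OO$ the Poincar\'e inequality together with $\dm\n u\dm_2^2=-(u,\Delta u)\le\dm u\dm_2\dm\Delta u\dm_2$ upgrades $\dm D^2u\dm_2$ to control the full $W^{2,2}$--norm, while on an exterior domain the decay and the scale invariance of the estimate make the lower--order corrections superfluous.

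It then remains to replace $\dm D^2u\dm_2$ by $\dm P\Delta u\dm_2$. For this I would use the Helmholtz decomposition $\Delta u=P\Delta u+\n p_u$, which exhibits $(u,p_u)$ as a solution of the stationary Stokes system $-\Delta u+\n p_u=-P\Delta u$, $\n\cdot u=0$ in $\OO$, $u=0$ on $\po$, with right--hand side $-P\Delta u\in L^2(\OO)$. The classical $L^2$ theory of this system (Cattabriga, Solonnikov; see also Galdi's monograph) yields
\be
\dm D^2u\dm_2+\dm\n p_u\dm_2\leq c\,\dm P\Delta u\dm_2,
\ee
for $\OO$ bounded or exterior with smooth boundary, the constant $c$ depending only on $\OO$ and $n$. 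Inserting this estimate into the Gagliardo--Nirenberg inequality of the previous step gives \rf{INTIII} with a constant independent of $u$, as claimed.

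The main obstacle I expect lies in the uniform statement of the second--order Stokes estimate across the bounded and the exterior case. On a bounded domain the positivity of the Stokes operator (a genuine spectral gap) lets one dispense with any additive lower--order term; on an exterior domain there is no spectral gap, and the clean estimate has to be recovered from the scale invariance of the homogeneous Stokes problem, taking care that the $L^2$ integrability of $P\Delta u$ alone controls $D^2u$ without a spurious $\dm u\dm_2$ on the right. Equivalently, one must verify that the Gagliardo--Nirenberg step does not introduce an additive remainder for non--compactly--supported, merely boundary--vanishing fields; this is the only point where the divergence--free constraint and the zero boundary values, rather than a full $W^{2,2}_0$ membership, are essential.
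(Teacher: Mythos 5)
Your argument is sound on a bounded domain, but the lemma --- and the paper's use of it, with $\OO$ bounded \emph{or} exterior --- must cover exterior domains, and there your second step is not merely delicate: it is false. The homogeneous Stokes estimate $\dm D^2u\dm_2+\dm\n p_u\dm_2\le c\dm P\Delta u\dm_2$ on an exterior domain of $\R^n$ is known to hold only for Lebesgue exponents $q<n/2$, and the exponent needed here is $q=2$ with $n=3$, which lies outside that range; no appeal to scale invariance can repair this, because the failure is genuine. A concrete counterexample: let $(U,P)$ be the classical Stokes flow past the unit sphere, i.e. $-\Delta U+\nabla P=0$, $\n\cdot U=0$ in $\{|x|>1\}$, $U=0$ on $\{|x|=1\}$, $U\to e_1$ at infinity, so that $U-e_1=O(|x|^{-1})$, $\nabla U=O(|x|^{-2})$, $P=O(|x|^{-2})$. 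Set $u_R:=\phi(x/R)\,U+w_R$, where $\phi$ is a cutoff equal to $1$ on $\{|x|\le1\}$ and to $0$ on $\{|x|\ge2\}$, and $w_R$ is a Bogovskii correction, supported in the annulus $\{R<|x|<2R\}$, restoring the solenoidality lost by the truncation, with $\dm D^2w_R\dm_2\le cR^{-1/2}$. Then $u_R\in W^{2,2}(\OO)\cap J^{1,2}(\OO)$, and since $\Delta U=\nabla P$ is annihilated by the Helmholtz projection up to commutator terms with $\phi$, a direct computation gives $\dm P\Delta u_R\dm_2\le c\,R^{-1/2}\to0$, while $\dm D^2u_R\dm_2\to\dm D^2U\dm_{L^2(|x|>1)}>0$ as $R\to\infty$. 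Hence no constant can close your second displayed inequality, and the composition ``Gagliardo--Nirenberg plus Stokes regularity'' cannot yield \rf{INTIII} on exterior domains.

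This failure is precisely why the paper does not argue as you do but instead quotes \cite{MRIII,MRIV}: in those references the interpolation inequality is proved directly, by duality and representation arguments attached to the Stokes problem, deliberately bypassing any control of the full second derivatives by $\dm P\Delta u\dm_2$ --- inequality \rf{INTIII} is true on exterior domains even though the intermediate estimate you interpose is false there. A secondary gap, worth recording: even granting $\dm D^2u\dm_2$ on the right-hand side, the purely homogeneous Gagliardo--Nirenberg inequality on a domain is not automatic for fields that merely vanish on $\po$, since the zero extension of a $W^{2,2}\cap W^{1,2}_0$ field is not in $W^{2,2}$ (the normal derivative jumps) and bounded extension operators destroy homogeneity. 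On a bounded domain you can absorb the lower-order terms as you indicate; on an exterior domain this step, too, would need the kind of direct argument carried out in the cited papers.
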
\bp See \cite{MRIII,MRIV}\,.\ep
\begin{lemma}\label{CL}{\sl For any $T>0$, there exists a constant $M>0$, not depending on $m$, such that
$$\int_0^T\frac{\left|\frac d{dt}\|v^m(t)\|_2^2\right|}{\left(1+\|\nabla v^m\|_2^2\right)^2}\,dt\le M$$
where $v^m$ is the solution of problem \eqref{MNS} stated in Lemma\,\ref{EXR}.
}
\end{lemma}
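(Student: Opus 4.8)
The plan is to combine the energy identity for the regularized problem \rf{MNS} with the algebraic structure of the weight $(1+\|\n v^m\|_2^2)^2$ in the denominator. First I would take the scalar product of the equation in \rf{MNS} with $v^m$ and integrate over $\OO$. Since the mollification preserves the divergence-free constraint, the convective term $(J_m[v^m]\cdot\n v^m,v^m)$ vanishes; as $v^m=0$ on $\po$ the pressure term gives no contribution and $(\Delta v^m,v^m)=-\|\n v^m\|_2^2$. The regularity recorded in \rf{EXR-I}, namely $v^m\in C([0,T);J^{1,2}(\OO))$ together with $v^m_t\in L^2(0,T;L^2(\OO))$, makes $t\mapsto\|v^m(t)\|_2^2$ absolutely continuous, so that
\be\label{CLen}\mbox{$\frac d{dt}$}\|v^m(t)\|_2^2=-2\|\n v^m(t)\|_2^2+2(f,v^m)\quad\mbox{for a.e. }t\in(0,T)\,.\ee

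From \rf{CLen} and the triangle inequality one has $\big|\frac d{dt}\|v^m\|_2^2\big|\le2\|\n v^m\|_2^2+2|(f,v^m)|$, and I would bound the two resulting contributions to the integral separately. For the dissipative term the decisive observation is the elementary inequality
$$\frac{2\,\|\n v^m\|_2^2}{\big(1+\|\n v^m\|_2^2\big)^2}\le\frac{2}{1+\|\n v^m\|_2^2}\le2\,,$$
which is precisely what the quadratic weight is designed to exploit: however large $\|\n v^m(t)\|_2$ may be, this ratio stays bounded by $2$, so its integral over $(0,T)$ does not exceed $2T$. For the forcing term I would simply discard the denominator (being $\ge1$) and use the Schwarz inequality in space, $|(f,v^m)|\le\|f\|_2\,\|v^m\|_2$.

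To close the estimate I need a bound on $\|v^m(t)\|_2$ uniform in $m$. Integrating \rf{CLen} and using $(f,v^m)\le\frac12\|f\|_2^2+\frac12\|v^m\|_2^2$, Gronwall's lemma produces
$$\sup_{t\in[0,T]}\|v^m(t)\|_2\le K\,,$$
with $K$ depending only on $T$, on $\|f\|_{L^2(0,T;L^2(\OO))}$ and on $\sup_m\|v_0^m\|_2$, the last quantity being finite because $v_0^m\to v_0$ in $J^2(\OO)$. Consequently the forcing contribution to the integral is at most $2K\intll0T\|f\|_2\,d\tau\le2K\sqrt T\,\|f\|_{L^2(0,T;L^2(\OO))}$, by the Schwarz inequality in time. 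Adding the two bounds furnishes a constant $M$ independent of $m$, as required.

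The computation itself is short; the only points demanding care are the rigorous justification of \rf{CLen} as a pointwise-in-time identity and the vanishing of the nonlinear term, both of which rest on the regularity stated in Lemma\,\ref{EXR} and on the divergence-free character of the mollifier $J_m[\cdot]$. I therefore expect the main (and rather modest) obstacle to lie in invoking these two structural properties correctly, rather than in any genuine analytic difficulty.
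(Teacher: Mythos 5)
Your computation is sound line by line for the inequality \emph{exactly as printed}: the energy identity, the pointwise bound $2\|\nabla v^m\|_2^2/(1+\|\nabla v^m\|_2^2)^2\le 2$, and the Gronwall bound on $\sup_t\|v^m(t)\|_2$ are all correct, and they do yield an $m$-independent bound for $\int_0^T\bigl|\frac d{dt}\|v^m\|_2^2\bigr|(1+\|\nabla v^m\|_2^2)^{-2}dt$. The difficulty is that the printed statement is a misprint. Both the paper's own proof and the sole place where the lemma is invoked --- estimate \eqref{last} in the proof of Theorem \ref{EE}, which bounds $\int_{J^m(\alpha)}|\dot\rho_m|(1+\rho_m)^{-2}d\tau$ with $\rho_m(t)=\|\nabla v^m(t)\|_2^2$ --- show that the numerator must be $\bigl|\frac d{dt}\|\nabla v^m(t)\|_2^2\bigr|$, the time derivative of the \emph{Dirichlet} norm, not of the $L^2$ norm. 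For that statement your strategy collapses at the first step: the first-order energy identity expresses $\frac d{dt}\|v^m\|_2^2$ through $\|\nabla v^m\|_2^2$ and $(f,v^m)$, but carries no information at all about $\dot\rho_m$, and there is no analogue of the cancellation you exploit, since nothing controls $\dot\rho_m$ pointwise in terms of $\rho_m$.

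The intended estimate is genuinely second order, and this is where the real work of the paper lies. One multiplies the equation by $P\Delta v^m-v^m_t$ (not by $v^m$), bounds the nonlinearity via the interpolation inequality of Lemma \ref{TINT} together with Sobolev's inequality, $\|v^m\cdot\nabla v^m\|_2\le c\|P\Delta v^m\|_2^{1/2}\|\nabla v^m\|_2^{3/2}$, and uses the identity
\begin{equation*}
\frac d{dt}\|\nabla v^m\|_2^2+\|P\Delta v^m\|_2^2+\|v^m_t\|_2^2=\|P\Delta v^m-v^m_t\|_2^2 .
\end{equation*}
Dividing by $(1+\|\nabla v^m\|_2^2)^2$ is precisely what closes the scheme: the weighted derivative integrates to the bounded quantity $-(1+\|\nabla v^m\|_2^2)^{-1}$, and the weight tames the supercritical term $c\|\nabla v^m\|_2^6$ produced by the nonlinearity into $c\|\nabla v^m\|_2^2$, whose time integral is bounded uniformly in $m$ by the energy inequality. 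This first yields uniform bounds on the weighted integrals of $\|P\Delta v^m\|_2^2$ and $\|v^m_t\|_2^2$, and only then, reusing the identity, the bound on the weighted integral of $|\frac d{dt}\|\nabla v^m\|_2^2|$. None of these ingredients (Stokes operator, interpolation, second-order identity) appear in your proposal, so as a proof of the lemma the paper actually needs and uses, it has a fundamental gap --- though, to be fair, the gap originates in the misprinted statement you were given rather than in a faulty deduction on your part.
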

\begin{proof} 
By virtue of the regularity of $(v^m,\pi^m)$ stated in \rf{EXR-I}, we multiply equation \rf{MNS}$_1$ by $P\Delta v^m-v^m_t$. Integrating by parts on $\OO$, and applying the H\"older inequality, we get 
\be\label{PDelta-vt}\dm P\Delta v^m-v^m_t\dm_2^2\leq2\dm v^m\cdot\n v^m\dm_2^2+2\|f\|_2^2\,,\mbox{ a.e. in }t>0\,.\ee
Applying inequality \rf{INTIII}
with $r=\infty$ and $q=6$, by virtue of the Sobolev inequality, we obtain 
\be\label{NLT}\dm v^m\cdot\n v^m\dm_2\leq \dm v^m\dm_\infty\dm \n v^m\dm_2\leq c\dm  P\Delta v^m\dm_2^\frac12\dm \n v^m\dm_2^\frac32.\ee  
By inequalities \eqref{PDelta-vt} and \eqref{NLT}, we get 
\be\label{PdeltaL6}\dy\vs1\dm P\Delta v^m-v^m_t\dm_2^2\leq c\dm  P\Delta v^m\dm_2\dm \n v^m\dm_2^3+2\|f\|_2^2
\hfill\dy\le\frac12\|P\Delta v^m\|_2^2+
c\dm \n v^m\dm_2^6+2\|f\|_2^2,\ee
for all $m\in\N$ and a.e. in $t>0\,$.
Substituting in inequality \eqref{PdeltaL6} the identity
\be\label{ddtnablavm-I}\dy\vs1\mbox{$\frac d{dt}$}\dm \n v^m\dm_2^2+\dm P\Delta v^m\dm_2^2+\dm v^m_t\dm_2^2=\dm P\Delta v^m-v^m_t\dm_2^2\ee
and dividing by $(1+\dm \n v^m(t)\dm_2^2)^2$, we get the following estimate
$$\frac{\mbox{$\frac d{dt}$}\dm \n v^m\dm_2^2}{(1+\dm \n v^m\dm_2^2)^2\hskip-0.1cm\null}\hskip0.1cm+\frac{\frac12\dm P\Delta v^m\dm_2^2+\dm v_t^m\dm_2^2}{(1+ \dm \n v^m\dm_2^2)^2}\leq c\dm \n v^m\dm_2^2+\frac{2\|f\|_2^2}{\left(1+\|\nabla v^m\|_2^2\right)^2}\,.$$ 
Integrating on $(0,T)$ we have
\begin{equation*}\begin{split}\frac1{1+ \|\nabla v^m_0\|_2^2}-\frac1{1+ \|\nabla v^m(T)\|^2_2}+\int\limits_0^T\frac{\frac12\dm P\Delta v^m\dm_2^2+\dm v_t^m\dm_2^2}{(1+ \dm \n v^m\dm_2^2)^2}\,dt\\
\le c \int\limits_0^T\|\nabla v^m\|_2^2\,dt+2\int_0^T\frac{2\|f\|_2^2}{\left(1+\|\nabla v^m\|_2^2\right)^2}\,dt\le C.\end{split}\end{equation*}
It follows that
$$\int\limits_0^T\frac{\dm P\Delta v^m\dm_2^2}{(1+ \dm \n v^m\dm_2^2)^2}\,dt\le2C+2, \quad \int\limits_0^T\frac{\dm v_t^m\dm_2^2}{(1+ \dm \n v^m\dm_2^2)^2}\,dt\le C+1.$$
Using the identity \eqref{ddtnablavm-I} we get
\begin{equation*}\begin{split}\int\limits_0^T\frac{\dm P\Delta v^m-v^m_t\dm_2^2}{(1+ \dm \n v^m\dm_2^2)^2}\,dt=\int\limits_0^T\frac{\mbox{$\frac d{dt}$}\dm \n v^m\dm_2^2}{(1+\dm \n v^m\dm_2^2)^2}\,dt+\int\limits_0^T\frac{\dm P\Delta v^m\dm_2^2}{(1+ \dm \n v^m\dm_2^2)^2}\,dt+ \int\limits_0^T\frac{\dm v_t^m\dm_2^2}{(1+ \dm \n v^m\dm_2^2)^2}\,dt\\
\le-\frac1{1+ \|\nabla v^m(T)\|^2_2}+\frac1{1+ \|\nabla v^m_0\|_2^2}+3C+3\le3C+4.
\end{split}\end{equation*}
Using once again identity \eqref{ddtnablavm-I} we get
\begin{equation*}\begin{split}\int\limits_0^T\frac{\left|\mbox{$\frac d{dt}$}\dm \n v^m\dm_2^2\right|}{(1+\dm \n v^m\dm_2^2)^2}\,dt
\le\int\limits_0^T\frac{\dm P\Delta v^m-v^m_t\dm_2^2}{(1+ \dm \n v^m\dm_2^2)^2}\,dt+\int\limits_0^T\frac{\dm P\Delta v^m\dm_2^2}{(1+ \dm \n v^m\dm_2^2)^2}\,dt+\int\limits_0^T\frac{\dm v_t^m\dm_2^2}{(1+ \dm \n v^m\dm_2^2)^2}\,dt\\
\le6C+7=:M.\end{split}\end{equation*}  
\end{proof}

\begin{lemma}\label{WC}{\sl Let $\{h_m(t)\}$ be a sequence of non-negative functions   bounded in $L^1(0,T)$. Also, assume that $h_m(t)\to h(t)$ a.e. in $t\in (0,T)$ with $h(t)\in L^1(0,T)$. Let be $g:(0,\alpha_0)\longrightarrow\R$ a continuous and strictly increasing function such that $\lim\limits_{\alpha\to\alpha_0}g(\alpha)=+\infty$ and $p:[0,1)\times\R\longrightarrow[0,1]$ a continuous function such that
$p(\alpha,\rho)=1$ if $0\le\rho\le g(\alpha)$, $p(\alpha,\cdot)$ is weakly decreasing and $\lim\limits_{\rho\to+\infty}p(\alpha,\rho)=0$ for any $\alpha\in(0,\alpha_0)$.

Then we get
\be\label{WC-I}\lim_{\alpha\to\alpha_0}\lim_m\intll0Th_m(t)p(\alpha,h_m(t))dt=\intll0Th(t)dt\,,\ee}
\end{lemma}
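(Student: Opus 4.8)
The plan is to decouple the double limit into two independent passages: first fix $\alpha\in(0,\alpha_0)$ and identify the inner limit $\lim_m\int_0^Th_m(t)p(\alpha,h_m(t))\,dt$, then send $\alpha\to\alpha_0$ in the resulting quantity. Throughout I would write $\psi_\alpha(\rho):=\rho\,p(\alpha,\rho)$, which is continuous and non-negative as a product of continuous functions, and set $C:=\sup_m\|h_m\|_{L^1(0,T)}<\infty$.

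For the inner limit, since $h_m\to h$ a.e. and $\psi_\alpha$ is continuous, we have $\psi_\alpha(h_m)\to\psi_\alpha(h)$ a.e. on $(0,T)$. The issue is that one cannot apply dominated convergence directly, because $\psi_\alpha(h_m)\le h_m$ is not controlled by a single integrable majorant. Instead I would establish uniform integrability of $\{\psi_\alpha(h_m)\}_m$ and conclude by the Vitali convergence theorem, the measure of $(0,T)$ being finite. The uniform integrability rests on the tail estimate: for $\rho_0>g(\alpha)$, using that $p(\alpha,\cdot)$ is weakly decreasing and $0\le p\le1$,
$$\int_{\{h_m>\rho_0\}}\psi_\alpha(h_m)\,dt\le p(\alpha,\rho_0)\int_{\{h_m>\rho_0\}}h_m\,dt\le C\,p(\alpha,\rho_0)\,,$$
which tends to $0$ as $\rho_0\to+\infty$ uniformly in $m$, whereas on $\{h_m\le\rho_0\}$ one has $0\le\psi_\alpha(h_m)\le\rho_0$. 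Hence for every measurable $E\subset(0,T)$ we get $\int_E\psi_\alpha(h_m)\,dt\le\rho_0|E|+C\,p(\alpha,\rho_0)$, which is the required uniform absolute continuity of the integrals. Together with the a.e. convergence this yields
$$\lim_m\int_0^Th_m(t)p(\alpha,h_m(t))\,dt=\int_0^Th(t)p(\alpha,h(t))\,dt=:I(\alpha)\,.$$

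For the outer limit, since $g$ is strictly increasing with $g(\alpha)\to+\infty$ as $\alpha\to\alpha_0$, for each fixed $\rho\ge0$ there is $\bar\alpha<\alpha_0$ with $g(\alpha)>\rho$, hence $p(\alpha,\rho)=1$, for all $\alpha\in(\bar\alpha,\alpha_0)$; thus $p(\alpha,\rho)\to1$ as $\alpha\to\alpha_0$. Consequently $h(t)p(\alpha,h(t))\to h(t)$ a.e., and since $0\le h\,p(\alpha,h)\le h\in L^1(0,T)$, the dominated convergence theorem gives $\lim_{\alpha\to\alpha_0}I(\alpha)=\int_0^Th(t)\,dt$, which is \rf{WC-I}. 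The main obstacle lies entirely in the inner limit, namely securing the convergence of $\int_0^T\psi_\alpha(h_m)\,dt$ without a uniform integrable bound on the $h_m$ themselves; this is precisely where the decay $p(\alpha,\rho)\to0$ as $\rho\to+\infty$, combined with the $L^1$-boundedness of $\{h_m\}$, must be exploited through the tail cutoff above, the remaining passages being routine.
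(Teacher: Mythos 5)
Your proof is correct, and it shares the paper's two-stage architecture: fix $\alpha$, identify the inner limit as $\int_0^T h(t)\,p(\alpha,h(t))\,dt$, then send $\alpha\to\alpha_0$ by dominated convergence (this outer step is essentially identical to the paper's, with the same majorant $h$ and the same observation that $g(\alpha)\to+\infty$ forces $p(\alpha,h(t))\to1$ pointwise). Where you genuinely differ is the inner limit. The paper decomposes $\int_0^T h_m\,p(\alpha,h_m)\,dt=\int_0^T(h_m-h)\,p(\alpha,h_m)\,dt+\int_0^T h\,p(\alpha,h_m)\,dt$: the second integral converges to $\int_0^T h\,p(\alpha,h)\,dt$ by dominated convergence with majorant $h$, while the first is shown to vanish by splitting $(0,T)$ at the threshold $g(\alpha_0-\vep)$ --- dominated convergence (majorant $g(\alpha_0-\vep)+|h|$) on the set where $h_m\le g(\alpha_0-\vep)$, and the bound $c\,p(\alpha,g(\alpha_0-\vep))$ coming from monotonicity of $p(\alpha,\cdot)$ and the uniform $L^1$ bound on the complementary set, letting $\vep\to0$ after $m\to\infty$. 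You instead avoid subtracting $h$ altogether: you work directly with $\psi_\alpha(h_m)=h_m\,p(\alpha,h_m)$ and invoke the Vitali convergence theorem, with uniform integrability supplied by the tail estimate $\int_E\psi_\alpha(h_m)\,dt\le\rho_0|E|+C\,p(\alpha,\rho_0)$. The operative mechanism is the same in both proofs --- decay of $p(\alpha,\cdot)$ at infinity times the uniform $L^1$ bound kills the region where $h_m$ is large --- but your packaging through uniform integrability dispenses with the difference $h_m-h$ and the auxiliary parameter $\vep$, at the price of citing Vitali's theorem, whereas the paper stays entirely within dominated convergence and is thus more self-contained; both arguments are complete.
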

\bp 
We have
$$\ba{ll}\displ\intll0Th_m(t)p(\alpha,h_m(t))dt\hskip-0.2cm&\displ=\intll0T(h_m(t)-h(t))p(\alpha,h_m(t))dt+
\intll0Th(t)p(\alpha,h_m(t))\\&=:I_1(\alpha,m)+I_2(\alpha,m)\,.\ea$$ 
We fix $\alpha\in(0,\alpha_0)$ and we consider the first integral. For any $\vep\in(0,\alpha_0-\alpha)$ we set
\begin{equation}\label{Jm-+}J^-_m(\vep)=\{t:h_m(t)\leq g(\alpha_0-\vep)\},\qquad J^+_m(\vep)=\{t:g(\alpha_0-\vep)<h_m(t)\}.\end{equation}
Hence we have
\begin{equation*}\begin{split}I_1(\alpha,m)=\intll0T\chi_{J^-_m(\vep)}(t)(h_m(t)-h(t))p(\alpha,h_m(t)) dt+\intll0T\chi_{J_m^+(\vep)}(t)(h_m(t)-h(t))p(\alpha,h_m(t))dt\,\\
=:I_1^-(\alpha,m,\vep)+I_1^+(\alpha,m,\vep).\end{split}\end{equation*}
By \eqref{Jm-+} we get
$$|\chi_{J^-_m(\vep)}(t)(h_m(t)-h(t))p(\alpha,h_m(t))|\leq g(\alpha_0-\vep)+|h(t)|$$
hence, by the dominated convergence theorem, we have
\be\label{limI1-}\lim_m I_1^-(\alpha,m,\vep)=0, \qquad\forall\,\alpha,\vep.\ee
Since $p(\alpha,\cdot)$ is decreasing, we get
$$\left|\chi_{J_m^+(\vep)}(t)(h_m(t)-h(t))p(\alpha,h_m(t))\right|\le p(\alpha,g(\alpha_o-\vep))\left(|h_m(t)|+|h(t)|\right).$$
Using the boundedness of the sequence $(h_m)$ in $L^1$ we obtain that
\be\label{boundI1+}|I_1^+(\alpha,m,\vep)|\le c p(\alpha,g(\alpha_0-\vep)),\qquad\forall m\in\N.\ee
By \eqref{limI1-} and \eqref{boundI1+} we get
$$0\le\limsup_m |I_1(\alpha,m)|\le c p(\alpha,g(\alpha_0-\vep)),\qquad\forall\,\alpha,\vep.$$
Since $\lim\limits_{\vep\to0}p(\alpha,g(\alpha_0-\vep)=0$ we have that
$$\lim_m I_1(\alpha,m)=0,\qquad\forall\,\alpha.$$
Now we consider the integral $I_2(\alpha,m)$. Since $\left|p(\alpha,h_m(t))h(t)\right|\leq1$ and $\lim\limits_m h_m(t)=h(t)$ a.e. in $t\in(0,T)$, by the dominated convergence theorem, we get
$$\lim_mI_2(\alpha,m)=\intll0Th(t)p(\alpha,h(t))dt.$$
Finally, since $\lim\limits_{\alpha\to\alpha_0}p(\alpha,h(t))=1$ we have that
$$\lim_{\alpha\to\alpha_0}\lim_mI_2(\alpha,m)=\intll0Th(t)dt,$$
and this completes the proof.
\ep
\section{\large The chief result}
We recall the definition $$\mathcal T:=\left\{t\in(0,T): \|v^m(t)\|_{1,2}\to\|v(t)\|_{1,2}\right\},$$ where $\{v^m\}$ is the sequence of solutions to problem \rf{MNS}. By virtue of the strong convergence stated in Lemma\,\ref{EXR}, the set $\mathcal T$ is certainly not empty and, as matter of  fact, it is of full measure in $(0,T)$ .
\begin{tho}\label{EE}{\sl Let $v$ be the weak solution stated in Lemma\,\ref{EXR}. Then
 $v$ satisfies the relation  
 \be\label{SLM}\dm v(t)\dm_2^2+2 \intll st\dm\n v(\tau)\dm_2^2d\tau  -\dm v(s)\dm_2^2-\intll st(f,v)d\tau=-M(s,t)\,,\mbox{ for all }t>s>\in \mathcal T,\ee
 with 
$$-M(s,t):=-2\lim_{\alpha\to1^-}\overline{\lim_m}\intl{J^m(\alpha)}\dm \n v^m(\tau)\dm_2^2\,d\tau=\lim_{\alpha\to1^-}\overline{\lim_m}\mbox{\large$\underset{h\in\N(\alpha,m)}\sum$}\Big[ \dm v^m(t_h)\dm_2^2-\dm v^m(s_h)\dm_2^2\Big]$$
where $J^m(\alpha)={\underset {i\in \N(\alpha,m)}\cup}(s_i,t_i)$ with $\N(\alpha,m)\subseteq\N$ which is, at most, a sequence of integers with   $(s_i,t_i)\cap (s_j,t_j)=\emptyset$ for any $i\not=j$ and \be\label{LMJ}\displ\lim_{\alpha\to1^-}\frac{|J^m(\alpha)|}{1-\alpha}\leq \frac1\pi\dm v_0\dm_2^2+\frac2\pi\intll0t(f,v)d\tau\,,\mbox{ uniformly with respect to }m\,.\ee
Moreover, if $s=0$, the relation \eqref{SLM} holds  setting $s=0$ in the left-hand side, and   with the right-hand side replaced by $\lim\limits_k M(s_k,t)$ where $\{s_k\}$ is any sequence in $\mathcal T$ converging to $0$.
}
\end{tho}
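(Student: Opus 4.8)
The plan is to start from the differential/integral energy equality of the approximating solutions $v^m$ and decompose the interval $(s,t)$ according to the size of the gradient. For each $m$ and each threshold parameter $\alpha\in(0,1)$, I would split $(s,t)$ into the ``good'' set where $\dm\n v^m\dm_2^2$ is not too large and the ``bad'' set $J^m(\alpha)$ where it is large. On the good set the strong convergence of Lemma~\ref{EXR} (in $L^p(0,T;W^{1,2})$ for $p<2$ and in $L^2(0,T;L^2)$) will be enough to pass to the limit and recover $2\intll st\dm\n v\dm_2^2\,d\tau$ together with the continuous-in-time terms $\dm v(t)\dm_2^2$, $\dm v(s)\dm_2^2$ (legitimate because $s,t\in\mathcal T$), and $\intll st(f,v)\,d\tau$. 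The contribution of the bad set is precisely what becomes $-M(s,t)$, so the whole point is to control it rather than show it vanishes.

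\medskip
The mechanism for isolating the bad set is Lemma~\ref{WC}: I would take $h_m(t)=\dm\n v^m(t)\dm_2^2$ (non-negative, bounded in $L^1(0,T)$ by the uniform energy estimate, and convergent a.e.\ to $\dm\n v(t)\dm_2^2\in L^1$), choose $g$ with $\alpha_0=1$ so that the threshold $g(\alpha)$ tends to $+\infty$ as $\alpha\to1^-$, and pick a cutoff $p(\alpha,\rho)$ that equals $1$ for $\rho\le g(\alpha)$ and decays to $0$. Then \eqref{WC-I} gives
\[
\lim_{\alpha\to1^-}\lim_m\intll st h_m(t)\,p(\alpha,h_m(t))\,dt=\intll st\dm\n v(\tau)\dm_2^2\,d\tau,
\]
so the ``masked'' gradient integral converges to the correct limit, and the complementary integral over $J^m(\alpha)$ is exactly the defect $-M(s,t)$. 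This simultaneously produces the two equivalent expressions for $-M(s,t)$: the integral form $-2\lim_\alpha\overline{\lim}_m\intl{J^m(\alpha)}\dm\n v^m\dm_2^2\,d\tau$, and, via the integral energy equality for $v^m$ applied on each subinterval $(s_h,t_h)$, the telescoping-sum form $\sum_h[\dm v^m(t_h)\dm_2^2-\dm v^m(s_h)\dm_2^2]$.

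\medskip
The main obstacle will be the quantitative bound \eqref{LMJ} on the measure of $J^m(\alpha)$, i.e.\ showing $|J^m(\alpha)|/(1-\alpha)$ stays bounded by $\frac1\pi\dm v_0\dm_2^2+\frac2\pi\intll0t(f,v)\,d\tau$ uniformly in $m$. Here I would exploit Lemma~\ref{CL}: the quantity $\bigl|\frac{d}{dt}\dm v^m\dm_2^2\bigr|/(1+\dm\n v^m\dm_2^2)^2$ is integrable uniformly in $m$, and combined with the basic energy identity $\frac{d}{dt}\dm v^m\dm_2^2=-2\dm\n v^m\dm_2^2+2(f,v^m)$ this links the measure of the region where $\dm\n v^m\dm_2^2$ is large to a controlled integral. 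The calibration constants $\frac1\pi$ and $\frac2\pi$ strongly suggest that $g(\alpha)$ and $p(\alpha,\cdot)$ should be chosen with an explicit trigonometric/arctangent profile (e.g.\ $g(\alpha)=\tan(\frac\pi2\alpha)$ or a comparison with $\intl{}\frac{d\rho}{(1+\rho)^2}$-type tails), so that as $\alpha\to1^-$ the ratio $|J^m(\alpha)|/(1-\alpha)$ is estimated by a Chebyshev-type argument against the uniformly bounded integral of Lemma~\ref{CL}. Getting the constant sharp, and ensuring the bound is genuinely uniform in $m$ before passing to $\overline{\lim}_m$ and then $\alpha\to1^-$, is the delicate part. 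The final $s=0$ statement then follows by continuity, taking $s_k\to0$ in $\mathcal T$ and using $\dm v(s_k)\dm_2\to\dm v_0\dm_2$ from the initial-data convergence in Definition~\ref{WS}.
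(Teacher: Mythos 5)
Your plan is structurally the paper's own: the mollified approximations with the Reynolds--Orr identity \rf{ER}, the arctangent cutoff with threshold $g(\alpha)=\tan\alpha\frac\pi2$ (your guess is exactly the paper's choice \rf{WF}), Lemma~\ref{WC} for the limit of the masked gradient integral, the super-level sets $J^m(\alpha)$, the telescoping identity on the intervals $(s_h,t_h)$, and $s_k\to0$ in $\mathcal T$ for the case $s=0$. The genuine gap is at the central identification. What Lemma~\ref{WC} controls is $\intll st\rho_m\,p(\alpha,\rho_m)\,d\tau$, with $\rho_m:=\dm\n v^m\dm_2^2$, so the defect your argument isolates is $2\lim_{\alpha\to1^-}\overline{\lim}_m\intll st\rho_m\,(1-p(\alpha,\rho_m))\,d\tau$. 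This is not the quantity in the statement: the weight $1-p(\alpha,\cdot)$ is supported on $J^m(\alpha)$ but is far from its characteristic function (it vanishes at the endpoints $s_h,t_h$ and approaches $1$ only as $\rho_m\to\infty$), and the discrepancy $\intl{J^m(\alpha)}\rho_m\,p(\alpha,\rho_m)\,d\tau$ cannot be dismissed by the obvious estimate: since $\rho\,\big(\frac\pi2-\arctan\rho\big)\le1$, one only gets $\intl{J^m(\alpha)}\rho_m\,p(\alpha,\rho_m)\,d\tau\le\frac2{(1-\alpha)\pi}\,|J^m(\alpha)|$, which by \rf{LMJ} stays $O(1)$, not $o(1)$, as $\alpha\to1^-$. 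Closing this requires a separate argument, e.g.: for a.e.\ $\alpha$ (those with $|\{\tau:\rho(\tau)=\tan\alpha\frac\pi2\}|=0$) dominated convergence on the sub-level sets gives $\lim_m\intl{\{\rho_m\le\tan\alpha\frac\pi2\}}\rho_m\,d\tau=\intl{\{\rho\le\tan\alpha\frac\pi2\}}\rho\,d\tau$, whence $\overline{\lim}_m\intl{J^m(\alpha)}\rho_m\,d\tau=\lim_m\intll st\rho_m\,d\tau-\intl{\{\rho\le\tan\alpha\frac\pi2\}}\rho\,d\tau$ (the first limit exists by \rf{ER} together with $s,t\in\mathcal T$ and the strong $L^2(L^2)$ convergence), and monotone convergence as $\alpha\to1^-$ finishes. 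For comparison, the paper reaches the identification by a different and heavier route: it multiplies the differential identity \rf{ER} by $p(\alpha,\rho_m(\tau))$ and integrates by parts, so the defect first appears as $\frac2{(1-\alpha)\pi}\intl{J^m(\alpha)}\frac{\dm v^m\dm_2^2}{1+\rho_m^2}\,\dot\rho_m\,d\tau$, and Lemma~\ref{CL}, Fatou's lemma and the bound \rf{measJtilde} are then consumed precisely in reducing this expression to the two forms of $M(s,t)$ in the statement.

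Relatedly, your allocation of Lemma~\ref{CL} is misdirected. The measure bound \rf{LMJ} does not come from Lemma~\ref{CL}: it is plain Chebyshev against the energy bound, $|J^m(\alpha)|\tan\alpha\frac\pi2\le\intl{J^m(\alpha)}\rho_m\,d\tau\le\intll st\rho_m\,d\tau$, the latter bounded uniformly in $m$ by integrating \rf{ER}, combined with $(1-\alpha)\tan\alpha\frac\pi2\to\frac2\pi$; that is the paper's estimate \rf{MJ}. (In the paper, Lemma~\ref{CL} is needed only for the integration-by-parts step described above, which your route, once repaired, avoids entirely.) Two smaller omissions: (i) the description of $J^m(\alpha)$ as a disjoint union of intervals with $\rho_m(s_h)=\rho_m(t_h)=\tan\alpha\frac\pi2$ --- without which the telescoping sum is not even well defined --- requires first choosing $\alpha>\alpha_1$ and $m\ge m_0$ so that $\rho_m(s),\rho_m(t)<\tan\alpha\frac\pi2$, which is exactly where $s,t\in\mathcal T$ and the continuity of $\rho_m$ enter; (ii) integrating \rf{ER} over $J^m(\alpha)$ produces, besides the telescoping sum and the gradient term, the force term $\intl{J^m(\alpha)}(f,v^m)\,d\tau$, which must be shown to vanish in the double limit (it does, since $|(f,v^m)|\le c\dm f\dm_2$ and $|J^m(\alpha)|\le c/\tan\alpha\frac\pi2$ uniformly in $m$) before your two expressions for $-M(s,t)$ agree.
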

\bp We consider the sequence $\{v^m\}$ of solutions to problem \rf{MNS} whose existence is ensured by Lemma\,\ref{EXR}. For all $m\in\N$ the Reynolds-Orr equation holds:
\be\label{ER}\frac d{d\tau}\dm v^m(\tau)\dm_2^2+2\dm \n v^m(\tau)\dm_2^2=(f,v^m)\,.\ee
We set $\rho_m(t):=\dm\n v^m(t)\dm_2^2$\,,
and we consider \be\label{WF}\alpha\in(0,1)\,,\;p(\alpha,\rho_m):=\left\{\ba{ll}1&\mbox{if }\rho_m\in [0,\tan\alpha\frac\pi2]\vspace{4pt}\\ \frac{\frac\pi2-\arctan\rho_m}{(1-\alpha)\frac\pi2}&\mbox{if }\rho_m\in(\tan\alpha\frac\pi2\,,\infty)\,\ea\right..\ee
Let be 
$$\mathcal T=\left\{t\in(0,T): \| v^m(t)\|_{1,2}\to\| v(t)\|_{1,2}\right\}$$ 
and fix $s,t\in \mathcal T$, with $s<t$\,. Let $\alpha_1$ be  such that 
$$\max\{\dm \n v(s)\dm_2^2,\dm\n v(t)\dm_2^2\}<\tan\alpha\frac\pi2\,,\mbox{ for all }\alpha\in(\alpha_1,1)\,.$$
 Hence, by virtue of the pointwise  convergence, we claim the existence of $m_0$ such that\be\label{MV}\max\{\dm \n v^m(s)\dm_2^2,\dm\n v^m(t)\dm_2^2\}<\tan\alpha\frac\pi2\,, \mbox{ for all }m\geq m_0\mbox{ and }\alpha\in(\alpha_1,1)\,.\ee We set $\displ A^m:=\max_{[s,t]}\rho_m(t)$. We denote by 
 $$J^m(\alpha):=\{\tau:\rho_m(\tau)\in(\tan\alpha\frac \pi2,A^m]\}.$$ 
 If $A_m\leq \tan\alpha\frac\pi2$, then $J^m(\alpha)$ is an empty set. If $A_m>\tan\alpha\frac\pi2$ holds, since  $\rho_m(s)<\tan\alpha\frac\pi2$, there exists the minimum $\ov s>s$ such that $\rho_m(\ov s)=\tan\alpha\frac\pi2$\,, as well, being $\rho_m(t)<\tan\alpha\frac\pi2$, there exists the maximum $\ov t<t$ such that $\rho_m(\ov t)=\tan\alpha\frac\pi2$\,.    
Thus, if $J^m(\alpha)$ is a non-empty set, by the regularity of $\rho_m(t)$, we get that $J^m(\alpha)$ is at most the union of a sequence of open interval $(s_h,t_h)$ such that $\rho_m(s_h)=\rho_m(t_h)=\tan\alpha\frac\pi2$. We justify the claim. 
The set $J^m(\alpha)$ is an open set, hence it is at most the countable union of maximal intervals $(s_h,t_h)$.   We set  $E^m:=(s, t)-\ov{{\underset{h\in\N}\cup}(s_h,t_h)}$. \par For all $\tau\in E^m$ we have $\rho_m(\tau)\leq \tan\alpha\frac\pi2$, thus, by continuity of $\rho_m$, we get  $\rho_m(s_h) =\tan\alpha\frac\pi2=\rho_m(t_h)$ for all $h\in\N$.    
For the measure of $J^m(\alpha)$ we get
\be\label{MJ}|J^m(\alpha)|\tan\alpha\frac\pi2 \leq \intl{J^m(\alpha)}\rho_m(\tau)d\tau<\frac12\dm v(s)\dm_2^2+\intll st(f,v)d\tau\,,\ee 
where we took the energy relation \rf{ER} into account and the strong convergence of the right-hand side too. Estimate \rf{MJ} leads to \rf{LMJ}. Recalling the definition of $p(\alpha,\rho_m(t))$, we have 
\be\label{DT}\frac d{d\tau}p(\alpha,\rho_m(\tau))=\left\{\ba{ll}0&\mbox{a.e. in }\tau\in E^m\,,
\VS\frac{-2}{(1-\alpha)\pi}\frac{\dot\rho_m(\tau)}{1+(\rho_m(\tau))^2}&\mbox{for all  }\tau\in J^m(\alpha)\,,\ea\right.\ee 
where we took into account that, for all $\alpha\in(0,1)$, function $p$ is  Lipschitz's function in $\rho_m$, and $\rho_m(t)$ is  a  regular function in $t$. Hence,  we get  $p(\alpha,\rho_m(t))$ is Lipschitz's function with respect to $t$.
We multiply equation \rf{ER} for $p(\alpha,\rho_m(\tau))$, with $\alpha>\alpha_1$, and we integrate by parts on $(s,t)$:
$$\dm v^m(t)\dm_2^2+2\intll st p(\alpha,\rho_m(\tau))\dm\n v^m(\tau)\dm_2^2d\tau+\frac{2M(t,s,m,\alpha)}{(1-\alpha)\pi}=\dm v^m(s)\dm_2^2+\intll st(f,v^m)p(\alpha,\rho_m(\tau))d\tau\,,$$
where we set
$$M(t,s,m,\alpha):= \intl{J^m(\alpha)}\frac{\dm v^m(\tau)\dm_2^2}{1+\rho_m^2(\tau)}\hskip0.03cm{\dot\rho_m(\tau)}d\tau$$
where we took \rf{MV} and definition of $p$ into account. Letting $m\to\infty$ and $\alpha\to1$, by virtue of the pointwise convergence in $s$ and in $t$, and Lemma\,\ref{WC}, we arrive at
\be\label{SLM-I}\dm v(t)\dm_2^2+2 \intll st\dm\n v(\tau)\dm_2^2d\tau  -\dm v(s)\dm_2^2-\intll st(f,v)d\tau=-M(s,t)\,,\ee
where we set 
$$M(s,t):=\lim_{\alpha\to1^-}\overline{\lim_m} \frac2{(1-\alpha)\pi}\intl{J^m(\alpha)}\frac{\dm v^m(\tau)\dm_2^2}{1+\rho_m(\tau)^2}\dot\rho_m(\tau)d\tau\,.$$
 Recalling the properties of $J^m(\alpha)$, for all $\alpha$ and $m$, integrating by parts, we get 
 $$\ba{ll}\displ \intl{J^m(\alpha)}\!\!\frac{\dm v^m\dm_2^2}{1+\rho_m^2}\dot\rho_md\tau\hskip-0.2cm\!&\displ=\mbox{\large$\underset{h\in\N(\alpha,m)}\sum$}\intll{s_h}{t_h}\frac{\dm v^m\dm_2^2}{1+\rho_m^2}\dot\rho_md\tau\\&\displ=\frac{\tan\alpha\frac\pi2}{1+\tan^2\!\alpha\frac\pi2\hskip-0.1cm}\hskip0.1cm\mbox{\large$\underset{h\in\N(\alpha,m)}\sum$}\Big[ \dm v^m(t_h)\dm_2^2-\dm v^m(s_h)\dm_2^2\Big]+\mbox{\large$\underset{h\in\N(\alpha,m)}\sum$}\intll{s_h}{t_h}\frac{2\rho_m^2}{1+\rho_m^2}d\tau\\&\displ\hskip2cm
-2\sum_{h\in\N(\alpha,m)}\int\limits_{s_h}^{t_h}\frac{\rho_m(f,v_m)}{1+\rho_m^2}\,d\tau+2\mbox{\large$\underset{h\in\N(\alpha,m)}\sum$}\intll{s_h}{t_h}\frac{\dm v^m\dm_2^2\rho_m^2}{(1+\rho_m^2)^2\hskip-0.1cm}\hskip0.1cm\dot\rho_md\tau\,.\ea$$
Hence we arrive at
\be\label{SLM-I}\ba{l}\displ\mbox{\large$\underset{h\in\N(\alpha,m)}\sum$}\intll{s_h}{t_h}\frac{\dm v^m\dm_2^2}{1+\rho_m^2}\dot\rho_md\tau-2\mbox{\large$\underset{h\in\N(\alpha,m)}\sum$}\intll{s_h}{t_h}\frac{\dm v^m\dm_2^2\rho_m^2}{(1+\rho_m^2)^2\hskip-0.1cm}\hskip0.1cm\dot\rho_md\tau+\sum_{h\in\N(\alpha,m)}\int\limits_{s_h}^{t_h}\frac{\rho_m(f,v_m)}{1+\rho_m^2}\,d\tau\\
\hskip3cm\displ=\frac{\tan\alpha\frac\pi2}{1+\tan^2\!\alpha\frac\pi2\hskip-0.1cm}\hskip0.1cm\mbox{\large$\underset{h\in\N(\alpha,m)}\sum$}\Big[ \dm v^m(t_h)\dm_2^2-\dm v^m(s_h)\dm_2^2\Big]+\mbox{\large$\underset{h\in\N(\alpha,m)}\sum$}\intll{s_h}{t_h}\frac{2\rho_m^2}{1+\rho_m^2}d\tau\,. \ea\ee 
We estimate the last integral. 
Let be
\be\label{Jtilde}\widetilde J(\alpha):=\limsup_{m\to\infty}J^m(\alpha)=\bigcap_{j=0}^\infty\bigcup_{m=j}^\infty J^m(\alpha).\ee
It results that
$$\tau\in \widetilde J(\alpha)\iff \exists\, m_k\to\infty \mbox{ s.t. } \tau\in J^{m_k}(\alpha)\ \forall k\in\N\iff\limsup_{m\to\infty}\chi_{J^m(\alpha)}(\tau)=1.$$
hence, if $\tau\in \widetilde J(\alpha)\cap\mathcal T$ we get that
\be\label{rhoonJalpha}\rho(\tau)=\lim_{k\to\infty}\rho_{m_k}(\tau)\ge\tan\frac{\alpha\pi}2.\ee
On the complement of the set $\mathcal T$ we can set $\rho=0$, since the value on a null measure set does not change the estimates.
Since $0\le\chi_{J^m(\alpha)}\frac{\rho_m^2}{1+\rho_m^2}\le1$, by Fatou's lemma, it follows that
\begin{equation*}\begin{split}\frac1{1-\alpha}\limsup_m\int\limits_{J^m(\alpha)}\frac{\rho_m^2}{1+\rho_m^2}\,d\tau=\frac1{1-\alpha}\limsup_m\int\limits_s^t\chi_{J^m(\alpha)}\frac{\rho_m^2}{1+\rho_m^2}\,d\tau\\
\le\frac1{1-\alpha}\int\limits_s^t\limsup_m\chi_{J^m(\alpha)}\frac{\rho_m^2}{1+\rho_m^2}\,d\tau=\frac1{1-\alpha}\int\limits_s^t\chi_{\widetilde J(\alpha)}\frac{\rho^2}{1+\rho^2}\,d\tau=\frac1{1-\alpha}\int\limits_{\widetilde J(\alpha)}\frac{\rho^2}{1+\rho^2}\,d\tau\\
\le\frac1{1-\alpha}\frac1{\tan\frac{\alpha\pi}2}\int\limits_{\widetilde J(\alpha)}\rho(\tau)\,d\tau
\end{split}\end{equation*}
Since $\rho\in L^1$ and, by \eqref{rhoonJalpha}, 
\be\label{measJtilde}
\left|\widetilde J(\alpha)\right|\le \frac{\|\rho\|_1}{\tan\frac{\alpha\pi}2}
\ee
 the last integral vanishes as $\alpha$ tends to $1^-$. Moreover
\be\label{LD}\lim_{\alpha\to1^-}\frac1{(1-\alpha)\tan\alpha\frac\pi2}=\frac\pi2\,\ee
hence 
\be\label{rho2frac1+rho2}\lim_{\alpha\to1^-}\overline{\lim_m}\frac1{1-\alpha}\int\limits_{J^m(\alpha)}\frac{\rho_m^2}{1+\rho_m^2}\,d\tau=0.\ee
Concerning the force term we have
\begin{equation*}\begin{split}\left|\int\limits_{J^m(\alpha)}\frac{\rho_m(f,v^m)}{1+\rho_m^2}\,d\tau\right|
\le\left(\int\limits_{J^m(\alpha)}\frac{\rho_m^2}{(1+\rho_m^2)^2}\,d\tau\right)^{\frac12}\left(\int\limits_{J^m(\alpha)}\|f(\tau)\|_2^2\|v^m(\tau)\|_2^2\,d\tau\right)^{\frac12}\\
\le\left(\frac1{1+(\tan\frac{\alpha\pi}2)^2}\right)^{\frac12}\left(\int\limits_{J^m(\alpha)}\frac{\rho_m^2}{1+\rho_m^2}\,d\tau\right)^{\frac12}\sup_{m,t}\|v^m(t)\|_2\left(\int\limits_{J^m(\alpha)}\|f(\tau)\|_2^2\,d\tau\right)^{\frac12}\\
\le\frac C{\tan\frac{\alpha\pi}2}\left(\frac{(\tan\frac{\alpha\pi}2)^2}{1+(\tan\frac{\alpha\pi}2)^2}\right)^{\frac12}|J^m(\alpha)|^{\frac12}\le\frac C{\tan\frac{\alpha\pi}2}\left(\frac c{\tan\frac{\alpha\pi}2}\right)^{\frac12}.
\end{split}\end{equation*}
It follows that
\be\label{force}\lim_{\alpha\to1^-}\frac1{1-\alpha}\overline{\lim_m}\int\limits_{J^m(\alpha)}\frac{\rho_m(f,v^m)}{1+\rho_m^2}\,d\tau=0.\ee  
Using algebraic manipulation we obtain the following relation:
$$\ba{l}\displ\mbox{\large$\underset{h\in\N(\alpha,m)}\sum$}\intll{s_h}{t_h}\frac{\dm v^m\dm_2^2}{1+\rho_m^2}\dot\rho_md\tau-2\mbox{\large$\underset{h\in\N(\alpha,m)}\sum$}\intll{s_h}{t_h}\frac{\dm v^m\dm_2^2\rho_m^2}{(1+\rho_m^2)^2\hskip-0.1cm}\hskip0.1cm\dot\rho_md\tau\\
\hskip2cm\displ =-\mbox{\large$\underset{h\in\N(\alpha,m)}\sum$}\intll{s_h}{t_h}\frac{\dm v^m\dm_2^2}{1+\rho_m^2}\dot\rho_md\tau +2\mbox{\large$\underset{h\in\N(\alpha,m)}\sum$}\intll{s_h}{t_h}\frac{\dm v^m\dm_2^2}{(1+\rho_m^2)^2\hskip-0.1cm}\hskip0.1cm\dot\rho_md\tau\,.\ea$$
Substituting the above relation in equation \eqref{SLM-I} we get
\begin{equation}\label{algebraic}\begin{split}\displ-\mbox{\large$\underset{h\in\N(\alpha,m)}\sum$}\intll{s_h}{t_h}\frac{\dm v^m\dm_2^2}{1+\rho_m^2}\dot\rho_md\tau +2\mbox{\large$\underset{h\in\N(\alpha,m)}\sum$}\intll{s_h}{t_h}\frac{\dm v^m\dm_2^2}{(1+\rho_m^2)^2\hskip-0.1cm}\hskip0.1cm\dot\rho_md\tau+\sum_{h\in\N(\alpha,m)}\int\limits_{s_h}^{t_h}\frac{\rho_m(f,v_m)}{1+\rho_m^2}\,d\tau\\
\displ=\frac{\tan\alpha\frac\pi2}{1+\tan^2\!\alpha\frac\pi2\hskip-0.1cm}\hskip0.1cm\mbox{\large$\underset{h\in\N(\alpha,m)}\sum$}\Big[ \dm v^m(t_h)\dm_2^2-\dm v^m(s_h)\dm_2^2\Big]+\mbox{\large$\underset{h\in\N(\alpha,m)}\sum$}\intll{s_h}{t_h}\frac{2\rho_m^2}{1+\rho_m^2}d\tau\end{split}\end{equation}
At last we estimate the integral
\begin{equation}\label{last}\begin{split}\left|\int\limits_{J^m(\alpha)}\frac{\dm v^m\dm_2^2}{(1+\rho_m^2)^2\hskip-0.1cm}\hskip0.1cm\dot\rho_md\tau\right|\le\int\limits_{J^m(\alpha)}\frac{\dm v^m\dm_2^2}{(1+\rho_m^2)^2\hskip-0.1cm}\hskip0.1cm|\dot\rho_m|\,d\tau\le\sup_{t,m}\|v^m(t)\|_2^2\int\limits_{J^m(\alpha)}\frac{|\dot\rho_m|}{(1+\rho_m^2)^2\hskip-0.1cm}\,d\tau\\
\le c\,\frac1{1+(\tan\frac{\alpha\pi}2)^2}\int\limits_{J^m(\alpha)}\frac{|\dot\rho_m|}{1+\rho_m^2\hskip-0.1cm}\,d\tau\le\frac{2c}{1+(\tan\frac{\alpha\pi}2)^2}\int\limits_{J^m(\alpha)}\frac{|\dot\rho_m|}{(1+\rho_m)^2\hskip-0.1cm}\,d\tau\le\frac{2cM}{1+(\tan\frac{\alpha\pi}2)^2}
\end{split}\end{equation}
where the last inequality follows by Lemma \ref{CL}. Hence, by \eqref{LD}, we get
$$\lim_{\alpha\to1^-}\limsup_m\left|\frac2{1-\alpha}\int\limits_{J^m(\alpha)}\frac{\dm v^m\dm_2^2}{(1+\rho_m^2)^2\hskip-0.1cm}\hskip0.1cm\dot\rho_md\tau\right|
\le\lim_{\alpha\to1^-}\frac2{1-\alpha}\frac{2cM}{1+(\tan\frac{\alpha\pi}2)^2}=0.
$$
Multiplying equation \eqref{algebraic} by $\frac2{(1-\alpha)\pi}$ and passing to the limit using \eqref{last}, \eqref{force} and \eqref{rho2frac1+rho2}, we get
$$-M(s,t)=\lim_{\alpha\to1^-}\overline{\lim_m}\mbox{\large$\underset{h\in\N(\alpha,m)}\sum$}\Big[ \dm v^m(t_h)\dm_2^2-\dm v^m(s_h)\dm_2^2\Big].$$
By equation \eqref{ER} we get
$$\mbox{\large$\underset{h\in\N(\alpha,m)}\sum$}\Big[ \dm v^m(t_h)\dm_2^2-\dm v^m(s_h)\dm_2^2\Big]=-2\int\limits_{J^m(\alpha)}\|\nabla v^m(\tau)\|_2^2\,d\tau+\int\limits_{J^m(\alpha)}(f,v^m)\,d\tau.$$
Let us consider the last integral. Since $\left|\left(f(\tau),v^m(\tau)\right)\right|\le\|f(\tau)\|_2\|v^m(\tau)\|_2\le c\|f(\tau)\|_2$ we can apply the Fatou's lemma to get
\begin{equation*}\begin{split}
\limsup_m\left|\int\limits_{J^m(\alpha)}(f,v^m)\,d\tau\right|
=\limsup_m\left|\int\limits_s^t\chi_{J^m(\alpha)}(f,v^m)\,d\tau\right|
\le\int\limits_s^t\limsup_m\left|\chi_{J^m(\alpha)}(f,v^m)\right|\,d\tau\\
\le c\int\limits_s^t\|f\|_2\limsup_m\chi_{J^m(\alpha)}\,d\tau
\le c\int\limits_s^t\|f\|_2\chi_{\widetilde J(\alpha)}\,d\tau
\end{split}\end{equation*}
with $\widetilde J(\alpha)$ defined in \eqref{Jtilde}. Since $\|f(\tau)\|_2$ is summable, considering \eqref{measJtilde}, we get
$$\lim_{\alpha\to1^-}\limsup_m\left|\int\limits_{J^m(\alpha)}(f,v^m)\,d\tau\right|=0$$
and this completes the proof in the case of $s,t\in\mathcal T$. In order to complete the proof of the theorem, we limit ourselves to  remark that, letting $s\to0$, the left-hand side tends to values in $0$, in particular on any sequence $\{s_k\}\subset \mathcal T$  letting to 0, and as a consequence the limit on $\{s_k\}$ of the right hand side is well posed.    
\ep

{\bf Acknowledgements -} The research activity of F.C. and P.M. is performed under the
auspices of   GNFM-INdAM, and the research activity of C.R.G. is performed under the auspices of GNAMPA-INDAM. \newline The research activity of F.C has been supported by the Program (Vanvitelli per la Ricerca: VALERE) 2019 financed by the University of Campania ``L. Vanvitelli''.\newline The research activity of C.R.G. is partially supported by PRIN 2020 ``Nonlinear evolution PDEs, fluid dynamics and transport equations: theoretical foundations and applications.''
\vskip0.1cm 
{\bf Declarations\vskip0.1cm\par Conflict of interests} The authors declare that they have no conflict of interest.
{\small
}


\begin{thebibliography}{99}\bibitem{ABC}D. Albritton, E. Bru\é and M. Colombo, {\it Non-uniqueness of Leray solutions of the forced Navier-Stokes equations,} arXiv:2112.031116v1, (2021).
\bibitem{BV}
T. Buckmaster and V. Vicol, \emph{Nonuniqueness of weak solutions to the
  {N}avier-{S}tokes equation}, Ann. of Math. (2) \textbf{189} (2019), no.~1,
  101--144. 
\bibitem{CKN}
L. Caffarelli, R.~Kohn, and L.~Nirenberg, \emph{Partial regularity of suitable
  weak solutions of the {N}avier-{S}tokes equations}, Comm. Pure Appl. Math.
  \textbf{35} (1982), no.~6, 771--831. 
\bibitem{CGM-I}F. Crispo, C.R. Grisanti and P. Maremonti, {\it Some new properties of a suitable weak solution to the Navier- Stokes equations}, in Waves in Flows: The 2018 Prague-Sum Workshop Lectures, series: Lecture Notes in Mathematical Fluids Mechanics, editors: G.P.Galdi, T. Bodnar, S. Necasova, Birkhauser.
\bibitem{CGM-II}F. Crispo, C.R. Grisanti and P. Maremonti, {\it Navier-Stokes equations: an analysis of a possible gap to achieve the energy equality}, Ricerche di Matematica, {\bf 70} (2021) 235-249, https://doi.org/10.1007/s11587-020-00525-5
\bibitem{GPG}
G.P. Galdi, \emph{On the relation between very weak and Leray-Hopf solutions to Navier-Stokes equations},  Proc. Amer. Math. Soc. 147 (2019), 5349-5359.
\bibitem{L}
J. Leray, \emph{Sur le mouvement d'un liquide visqueux emplissant l'espace},
  Acta Math. \textbf{63} (1934), no.~1, 193--248. 
\bibitem{MRIII}
P. Maremonti, \emph{Some interpolation inequalities involving {S}tokes
  operator and first order derivatives}, Ann. Mat. Pura Appl. (4) \textbf{175}
  (1998), 59--91. 
\bibitem{Mi}
P. Maremonti, \emph{A note on {P}rodi-{S}errin conditions for the regularity of a
  weak solution to the {N}avier-{S}tokes equations}, J. Math. Fluid Mech.
  \textbf{20} (2018), no.~2, 379--392. 
\bibitem{MRIV}
P. Maremonti, \emph{On an interpolation inequality involving the {S}tokes operator},
  Mathematical analysis in fluid mechanics---selected recent results, Contemp.
  Math., vol. 710, Amer. Math. Soc., Providence, RI, 2018, pp.~203--209.
\bibitem{Sh} V. Scheffer, {\it Hausdorff measure and the Navier-Stokes equations}, Comm. Math. Phys., {\bf 55} (1977),  97--112.
\end{thebibliography}
\end{document}